\pgfplotsset{compat=1.3}
\newtheorem{thm}{Theorem}
\newtheorem{prop}[thm]{Proposition}
\renewenvironment{proof}{\par\noindent{\bf Proof.}}{$\square$\par\bigskip}
\newtheorem{lemma}[thm]{Lemma}
\newtheorem{remark}[thm]{Remark}
\newtheorem{cor}[thm]{Corollary}
\newtheorem{example}[thm]{Example}
\def\C{\mathbb C}
\def\log{\operatorname{log}}
\begin{document}
\title{Linear recurrent sequences, Markov chains and their applications in graph theory}
\author[Rebecca Carter]{Rebecca Carter}
\address{Department of Mathematics and Statistics, Queen's University, Kingston, Ontario, K7L 3N6, Canada.}
\email{   rebecca.carter@queensu.ca }

\author[M. Ram Murty]{M. Ram Murty}
\address{Department of Mathematics and Statistics, Queen's University, Kingston, Ontario, K7L 3N6, Canada.}
\email{murty@queensu.ca}

\thanks{Research of the second author was partially supported by an NSERC Discovery grant. \\ \phantom{abc}\today}
\begin{abstract}  
After a brief review of the key theorems 
concerning recurrent sequences, we give
an explicit computation of the inverse of the
Vandermonde matrix.  This will then be used
to derive a sub-exponential decay error terms in the ergodic theorem
of Markov chains.  Finally, we apply these
results to give estimates for the diameters
of directed graphs.

\end{abstract}
\keywords{Markov chains, rate of convergence}
\subjclass{60J10, 60J20}
\maketitle
\section{Introduction}

The concept of recurrence is a fundamental theme
of science.  Indeed, knowledge can be defined as the recognition of
patterns and patterns are the manifestation of recurrence.  In mathematics, recurrent sequences
encode this phenomenon and enable us to predict
future movements of recurrent cycles.  In particular,
the theory of Markov chains that has now become
the foundation for many of the tools of the
internet age, ranging from the Google search engine
to data analytics, is essentially a theory of recurrent
sequences.  The limit theorem of
Markov chains (often referred to as the ergodic theorem of Markov chains) is fundamentally about 
the behavior of certain
recurrent sequences.  Surprisingly, very little
attention has been given to the study of error
terms in this celebrated Markov limit theorem.  
To the best of our knowledge, a ``sub-exponential rate of convergence'' is briefly alluded to in the text
by Koralov and Sinai 
\cite{KS} (see page 73 and Remark 5.10) and is indicated there as a ``remark''.
Moreover, their error term is not ``spectral''
in the sense that it depends on the entries of a
certain power of
the transition matrix rather than the eigenvalues
of the original transition matrix.
The purpose of this paper is to derive a
sub-exponential rate of convergence which is spectral
(see Theorem \ref{markov_convergence} below).
What is interesting about this bound is that
not only is it ``spectral'' but also that the error depends on the spacings between the eigenvalues, a phenomenon not noticed before.
In analytic number theory, knowledge of the spacings between
the zeros of the Riemann zeta function has led
to deeper knowledge about the distribution of prime numbers.  So, what we will see here is a similar
phenomenon.

After a brief review of the key theorems 
concerning recurrent sequences, we give
an explicit computation of the inverse of the
Vandermonde matrix.  This will then be used
to derive sub-exponential decay error terms in the ergodic theorem
of Markov chains.  Finally, we apply these
results to give new estimates for the diameters
of directed graphs.

We remark that the explicit determination of the Vandermonde matrix is not new (for example, it
appears as exercise 40 on page 36 of \cite{knuth}).
As far as we are aware, the first
formulation of the result in the form we
need it seems to be in the paper by 
Macon and Spitzbart \cite{macon-spitzbart}
written in 1958.  For the sake of completeness,
we provide a more direct proof of the result.
It is surprising that this result is rediscovered
again and again
in the literature.  The paper \cite{rawashdeh} published in 2019 is an example of this phenomenon.

As stated above, 
our primary goal here is to apply this result so as to give
an effective estimate for the error term in
the Markov limit theorem.  We begin with a 
brief review of linear recurrent sequences.

\section{Linear Recurrent Sequences}
A \textit{linear recurrent sequence} of order \(m\) is a sequence \(\{x_n\}_{n=0}^{\infty}\) that satisfies a recurrence relation of the form:
$$x_n=a_{m-1}x_{n-1}+\cdots+a_{0}x_{n-m}, $$
for \(n \ge m\). The first \(m\) terms \(\{x_0, \ldots, x_{m-1}\}\) are given as initial conditions. The \textit{characteristic polynomial} for the recurrence relation is the polynomial 
$$P(T):=T^m-a_{m-1}T^{m-1} -\cdots- a_1T - a_0. $$
Let $\lambda_1, \ldots, \lambda_m$ be the roots of the characteristic polynomial. Then, there exist polynomials $c_i(x) \in \mathbb{C}[x]$ such that
\begin{equation}\label{basic}
 x_n = \sum_{i=1}^m c_i(n) \lambda_i^n. 
 \end{equation}
This is called the closed form of the linear recurrent sequence. If a root $\lambda_i$ is simple, then the polynomial $c_i(x)$ is constant. 
The simplest method for deriving this is by the method of generating
functions.  One defines the formal power series
$$ \sum_{n=0}^\infty x_n T^n $$
and shows that this is a rational function
using the recurrence relation.  One then
uses a partial fraction decomposition to derive the explicit
formula for the $x_n$. There are many classical references
such as page 392 of Hardy \cite{hardy} or page
323 of \cite{graham} that provide a full explanation of how to use generating functions to find such solutions. In all these
derivations, the $c_i$'s occurring in the closed form expression
are often given in terms of the residues of the allied rational
function and never ``spectrally'' in terms of the $\lambda_i$.
Our goal in this paper is to provide a spectral formula for them
and then apply this to derive an explicit sub-exponential error term
for the classical limit theorem of Markov chains.
Other applications include computing diameters of directed graphs.

\subsection{Coefficients of a linear recurrent sequence}
Suppose that the roots $\lambda_i$ of a linear recurrent sequence $\{x_n\}_{n=0}^\infty$ are all simple. Then, the coefficients in its closed form (\ref{basic}) will be constants. We can then write the system of equations of the first $m$ terms of the sequence, in matrix notation: 
\[
\begin{bmatrix}
    x_1 \\
    x_2 \\
    \vdots \\
    x_m
\end{bmatrix} = 
\begin{bmatrix}
    \lambda_1 & \lambda_2 & \cdots & \lambda_m \\
    \lambda_1^2 & \lambda_2^2 & \cdots & \lambda_m^2 \\
    \vdots & \vdots & \ddots & \vdots \\
    \lambda_1^n & \lambda_2^n & \cdots & \lambda_m^n
\end{bmatrix} 
\begin{bmatrix}
    c_1 \\
    c_2 \\
    \vdots \\
    c_m
\end{bmatrix}.
\]
Here, the $m \times m$ matrix is a Vandermonde matrix with entries $\lambda_1, \ldots, \lambda_m$. In the case that the Vandermonde matrix is non-singular, it is possible to solve for the coefficients $c_i$ using the inverse of the matrix. Expressing these coefficients in terms of the roots of the sequence will have many applications, as we will see.

The determinant of this $m \times m$ Vandermonde matrix is
well-known to be
$$\det(V) = (-1)^{\binom{n}{2}}\prod_{i}\lambda_i\prod_{i>j}(\lambda_j-\lambda_i).$$
Therefore, if the roots $\lambda_1, \ldots, \lambda_m$ are non-zero and distinct, then $V$ is non-singular.
The explicit calculation of the inverse is not difficult
but surprisingly not so well-known.  As noted earlier, it appears as an
exercise on page 36 of \cite{knuth}.  But the
form that is given there is not useful for our application.
We therefore give an alternate derivation that is simple and 
helpful in our context.

Let $e_j(\lambda_1, \ldots, \lambda_n)$ be the $j^{th}$ degree \textit{elementary symmetric polynomial} in $n$ variables $\lambda_1, \ldots, \lambda_n$. Then, 
$$e_j(\lambda_1, \ldots, \lambda_n) = \sum\limits_{\substack{1 \leq k_1 < \cdots < k_{j} \leq n }} \lambda_{k_1}\cdots\lambda_{k_j}.$$
These polynomials will appear in the entries of the inverse of the Vandermonde matrix. It is useful to recall they can also be found as the coefficients of the monic polynomial with roots $-\lambda_i$.
More precisely, the elementary symmetric function, 
    \begin{equation*}
        e_{j}(\lambda_1,  \ldots, \lambda_n),
    \end{equation*}
    is the coefficient of \(x^{n-j}\) in the polynomial, \[\prod\limits_{\substack{1 \leq k \leq n} }(x + \lambda_k).\] 
The $j^{th}$ degree elementary symmetric polynomial in $n-1$ variables, 
$$\lambda_1, \ldots, \lambda_{i-1}, \lambda_{i+1}, \ldots, \lambda_{n}, $$ is conveniently denoted by $$e_{j}(\lambda, \ldots, \widehat{\lambda_i}, \ldots, \lambda_n),$$
where the ``hat'' on the $\lambda_j$ means that it is
absent in the evaluation.  With this notation in place, we can
state our results succinctly.

\begin{lemma} \label{Inverse of Vandermonde}
    The inverse of the \(n\times n\) Vandermonde matrix with distinct, non-zero entries \(\lambda_1, \ldots, \lambda_n\) is given by \(V^{-1} = [w_{ij}]\) where 
    \[w_{ij} = \frac{(-1)^{j-1}e_{n-j}(\lambda_1, \ldots, \widehat{\lambda_i}, \ldots, \lambda_n)}{\lambda_i \prod\limits_{\substack{1 \leq k \leq n \\ k \neq i} } (\lambda_i - \lambda_k)}.\]
\end{lemma}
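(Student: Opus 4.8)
The plan is to verify directly that the proposed matrix $W=[w_{ij}]$ satisfies $WV=I_n$. Since $V$ is square and, by the determinant formula recalled above, non-singular whenever the $\lambda_i$ are distinct and non-zero, this is enough to conclude $W=V^{-1}$. Writing out the $(i,k)$ entry of $WV$ and observing that the denominator of $w_{ij}$ does not depend on $j$, the identity $WV=I_n$ is equivalent to
\[
p_i(\lambda_k)=\delta_{ik}\,\lambda_i\!\!\prod_{\substack{1\le l\le n\\ l\ne i}}(\lambda_i-\lambda_l)\qquad(1\le i,k\le n),
\]
where $p_i(T):=\sum_{j=1}^{n}(-1)^{j-1}e_{n-j}(\lambda_1,\dots,\widehat{\lambda_i},\dots,\lambda_n)\,T^{\,j}$ is a single polynomial of degree $\le n$ with no constant term. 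So everything reduces to identifying $p_i$ in closed form and then performing two evaluations.

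The key step is the observation that $p_i(T)$ is, up to an overall power of $-1$, exactly $T\prod_{l\ne i}(T-\lambda_l)$. This is read off from the generating-function description of the elementary symmetric polynomials already recalled before the statement: starting from $\prod_{l\ne i}(x+\lambda_l)=\sum_{s=0}^{n-1}e_s(\widehat{\lambda_i})\,x^{\,n-1-s}$, one substitutes $x\mapsto-T$, multiplies through by $T$, and re-indexes the sum by $j=n-s$; collecting the powers of $-1$ yields $p_i(T)=(-1)^{n-1}\,T\prod_{l\ne i}(T-\lambda_l)$. With $p_i$ in this factored form the two evaluations are immediate: for $k\ne i$ the factor $(T-\lambda_k)$ occurs in the product and forces $p_i(\lambda_k)=0$; for $k=i$ no factor vanishes — here one uses both that $\lambda_i\ne0$ and that the $\lambda$'s are distinct — and $p_i(\lambda_i)$ equals $\lambda_i\prod_{l\ne i}(\lambda_i-\lambda_l)$ up to the ambient sign. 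Dividing back through by the common denominator then gives the desired identity $WV=I_n$.

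The one genuinely delicate point is the bookkeeping of the signs — the $(-1)^{j-1}$ in the definition of $w_{ij}$, the $(-1)^{n-1-s}$ produced by the substitution $x\mapsto-T$, and the $(-1)^{n-1}$ relating $\prod_{l\ne i}(x+\lambda_l)$ to $\prod_{l\ne i}(T-\lambda_l)$ — all of which must be reconciled against one another (and against the $(-1)^{\binom{n}{2}}$ appearing in the determinant formula); beyond that, the reduction to a polynomial identity, the use of the elementary-symmetric generating function, and the two evaluations are all routine. A more computational alternative would be to expand the relevant cofactors of $V$ directly and recognize the resulting alternating sums of products of the $\lambda$'s as the $e_{n-j}(\widehat{\lambda_i})$; but the interpolation-style argument above is cleaner, and it makes transparent both the appearance of $\prod_{l\ne i}(\lambda_i-\lambda_l)$ and the extra factor $\lambda_i$ in the denominator — the latter present precisely because the first row of $V$ here is $(\lambda_1,\dots,\lambda_n)$ rather than the $(1,\dots,1)$ of the classical Vandermonde matrix.
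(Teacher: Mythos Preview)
Your proof is correct and follows essentially the same route as the paper's: both arguments hinge on recognizing that the row-polynomial $\sum_{j} w_{ij}\,x^{j}$ (equivalently your $p_i$, up to the common denominator) is precisely $x\prod_{l\ne i}(x-\lambda_l)$ normalized so that its value at $\lambda_i$ is $1$, and then reading off the coefficients via the elementary symmetric functions. The only cosmetic difference is direction---the paper \emph{derives} the entries $w_{ij}$ by starting from $V^{-1}V=I$ and solving for the unknown polynomial $G(x)$, whereas you \emph{verify} that the stated $W$ satisfies $WV=I$---and your caution about the sign bookkeeping is well placed, since the paper's own proof in fact arrives at $(-1)^{n-j}$ rather than the $(-1)^{j-1}$ of the statement (the two differ by the global factor $(-1)^{n-1}$).
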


\begin{proof}
    By definition of the inverse, \(V^{-1}V = I\), where \(I\) is the \(n \times n\) identity matrix. Therefore, 
    \begin{equation} \label{inverse_eq}
        \sum_{t=1}^n w_{it}\lambda_j^t = \delta_{ij},
    \end{equation}
    where $\delta_{ij}$ is the Kronecker symbol (equal to 1 if $i=j$
    and zero otherwise). The left-hand side of the equation is a degree $n$ polynomial in $\lambda_j$ with complex coefficients. Let
    \begin{equation} \label{inverse polynomial}
        G(x)= \sum_{t=1}^n w_{it}x^t.
    \end{equation}
    Then, the left-hand side of (\ref{inverse_eq}) can be written as $G(\lambda_j)$ and $G(x)$ vanishes 
    when $x$ belongs to the $n$-element set \(\{0, \lambda_1, \ldots, \lambda_{i-1}, \lambda_{i+1}, \ldots, \lambda_n\}\)
    because the right hand side vanishes when $j\neq i$.  Since $G(x)$ is of degree $n$, these must be its only roots. Therefore, we can factor the polynomial as, 
    \begin{equation} \label{inverse_eq2}
        G(x) = Ax\prod\limits_{\substack{1 \leq k \leq n \\ k \neq i} } (x - \lambda_k), 
    \end{equation}
    where $A \in \mathbb{C}$. Furthermore by (\ref{inverse_eq}), $G(\lambda_i)=\delta_{ii}= 1$. Hence, 
    \begin{equation}
        1 = A\lambda_i\prod\limits_{\substack{1 \leq k \leq n \\ k \neq i} } (\lambda_i - \lambda_k).
    \end{equation}
    We can then solve for $A$ and write, 
    \begin{equation}
        G(x)= \frac{x\prod\limits_{\substack{1 \leq k \leq n \\ k \neq i} } (x - \lambda_k)}{\lambda_i\prod\limits_{\substack{1 \leq k \leq n \\ k \neq i} } (\lambda_i - \lambda_k)}.
    \end{equation}
    Using the elementary symmetric polynomials, we can express the polynomial in the numerator in standard form,
    \begin{equation}
        G(x) = \frac{x
        \Big(\sum_{t=1}^n(-1)^{n-t} e_{n-t}(\lambda_1, \ldots, \widehat{\lambda_i}, \ldots, \lambda_n)x^{t-1}\Big)
        }
        {\lambda_i\prod\limits_{\substack{1 \leq k \leq n \\ k \neq i} } (\lambda_i - \lambda_k)}.
    \end{equation}
    Then, by matching the coefficients in this expression with those in (\ref{inverse polynomial}), we conclude that the $ij^{th}$ entry of $V^{-1}$ is
    \begin{equation}
        w_{ij} = \frac{
        (-1)^{n-j} e_{n-j}(\lambda_1, \ldots, \widehat{\lambda_i}, \ldots, \lambda_n)
        }
        {\lambda_i\prod\limits_{\substack{1 \leq k \leq n \\ k \neq i} } (\lambda_i - \lambda_k)}.
    \end{equation}
\end{proof}

We will now use the entries of the inverse of a Vandermonde matrix to solve for the complex coefficients of a linear recurrent sequence with non-zero, distinct roots. 

\begin{prop} \label{coefficients prop}
    Let \(\{x_n\}_{n=0}^\infty\) be a sequence such that \(x_n=\sum_{i=1}^m c_i \lambda_i^n\), where the roots \(\lambda_1, \ldots, \lambda_m\) are non-zero and distinct, and $c_i \in \C$. Then, 
    \begin{equation}
        c_i = \sum_{t=1}^m (-1)^{m-t}\frac{e_{m-t}(\lambda_1, \ldots, \widehat{\lambda_i}, \ldots, \lambda_m)}{\lambda_i \prod\limits_{\substack{1\leq k \leq m \\ k \neq i}}(\lambda_i-\lambda_k)}x_t.
    \end{equation}
\end{prop}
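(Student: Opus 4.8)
The plan is to read the statement off as an immediate corollary of Lemma \ref{Inverse of Vandermonde}. First I would evaluate the closed form $x_n=\sum_{i=1}^m c_i\lambda_i^n$ at the $m$ consecutive indices $n=1,2,\ldots,m$ and collect the resulting equations in matrix form. This is precisely the system displayed just before Lemma \ref{Inverse of Vandermonde}, namely $\mathbf x=V\mathbf c$, where $\mathbf x=(x_1,\ldots,x_m)^{T}$, $\mathbf c=(c_1,\ldots,c_m)^{T}$, and $V=[\lambda_j^{\,i}]_{1\le i,j\le m}$ is the $m\times m$ Vandermonde matrix on the nodes $\lambda_1,\ldots,\lambda_m$.

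Since the $\lambda_i$ are nonzero and distinct, the determinant formula recorded above shows that $V$ is nonsingular, so we may solve for the coefficient vector as $\mathbf c=V^{-1}\mathbf x$. Writing $V^{-1}=[w_{ij}]$ and extracting the $i$-th coordinate gives $c_i=\sum_{t=1}^m w_{it}x_t$. Substituting the explicit value
$$w_{it}=\frac{(-1)^{m-t}\,e_{m-t}(\lambda_1,\ldots,\widehat{\lambda_i},\ldots,\lambda_m)}{\lambda_i\prod\limits_{\substack{1\le k\le m\\ k\ne i}}(\lambda_i-\lambda_k)}$$
from Lemma \ref{Inverse of Vandermonde} (with the ambient dimension $n$ there taken to be $m$) then yields precisely the claimed formula for $c_i$.

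The only thing to watch is a bookkeeping point: the Vandermonde matrix produced by the recurrence has its rows indexed by the exponent $n=1,\ldots,m$ and its columns indexed by the root, so one must confirm that it is literally the matrix whose inverse is computed in Lemma \ref{Inverse of Vandermonde} after the harmless renaming $n\mapsto m$. Once this identification is in place there is no further obstacle, and the argument is a one-line substitution. If one prefers a derivation that avoids quoting the lemma, one can instead substitute $x_t=\sum_{j=1}^m c_j\lambda_j^{\,t}$ into the candidate right-hand side and collapse the resulting double sum using the interpolation identity $\sum_{t=1}^m(-1)^{m-t}e_{m-t}(\lambda_1,\ldots,\widehat{\lambda_i},\ldots,\lambda_m)\,\lambda_j^{\,t}=\lambda_i\prod_{k\ne i}(\lambda_i-\lambda_k)\,\delta_{ij}$; but this identity is exactly the content of Lemma \ref{Inverse of Vandermonde}, so invoking the lemma directly is cleanest.
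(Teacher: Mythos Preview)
Your proposal is correct and follows essentially the same route as the paper: write the system $\mathbf x=V\mathbf c$, invert using Lemma \ref{Inverse of Vandermonde}, and read off $c_i=\sum_{t=1}^m w_{it}x_t$. The additional remarks you make about bookkeeping and the alternative verification via the interpolation identity are sound but go slightly beyond what the paper records.
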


\begin{proof}
    Let \(V\) be the \(m \times m\) Vandermonde matrix with non-zero, distinct entries \(\lambda_1, \ldots, \lambda_m\). Then, 
    \begin{equation*}
        V \begin{bmatrix}
            c_1 \\
            \vdots \\
            c_m
        \end{bmatrix} = 
        \begin{bmatrix}
            x_1 \\
            \vdots \\
            x_m
        \end{bmatrix}, 
    \end{equation*}
    and $V$ has inverse $V^{-1}=[w_{ij}]$. Therefore, we can solve for the coefficients $c_i$,
    \begin{equation*}
        \begin{bmatrix}
            c_1 \\
            \vdots \\
            c_m
        \end{bmatrix} = 
        V^{-1} \begin{bmatrix}
            x_1 \\
            \vdots \\
            x_m
        \end{bmatrix}
        \qquad and \qquad 
        c_i = \sum_{t=1}^m w_{it}x_t.
    \end{equation*}
    Hence, by Lemma \ref{Inverse of Vandermonde}, 
    $$c_i = \sum_{t=1}^m (-1)^{m-t}\frac{e_{m-t}(\lambda_1, \ldots, \widehat{\lambda_i}, \ldots, \lambda_m)}{\lambda_i \prod\limits_{\substack{1\leq k \leq m \\ k \neq i}}(\lambda_i-\lambda_k)}x_t.$$
\end{proof}

One example of a linear recurrent sequence that is of particular interest to us arises from the powers of any square matrix over a commutative ring. 

The \textit{characteristic polynomial} of an $m\times m$ matrix $A$ is $p(\lambda) = \det(\lambda I - A).$ Its roots are the \textit{eigenvalues} of $A$. Let us recall:

\begin{thm}[Cayley-Hamilton]
    Every square matrix over a commutative ring satisfies its own characteristic equation. 
\end{thm}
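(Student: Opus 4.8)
The plan is to give the classical \emph{adjugate} argument, which goes through verbatim over an arbitrary commutative ring $R$. Write the characteristic polynomial as
\[
 p(\lambda)=\det(\lambda I-A)=\lambda^{n}+c_{n-1}\lambda^{n-1}+\cdots+c_{1}\lambda+c_{0},\qquad c_{i}\in R,
\]
and let $B(\lambda)=\operatorname{adj}(\lambda I-A)$ be the adjugate (classical adjoint) of $\lambda I-A$, regarded as a matrix over the polynomial ring $R[\lambda]$. The only external fact I would invoke is the universal identity $\operatorname{adj}(M)\,M=\det(M)\,I$, valid for every square matrix $M$ over a commutative ring; applied to $M=\lambda I-A$ this gives
\begin{equation}\label{CH-key}
 B(\lambda)\,(\lambda I-A)=p(\lambda)\,I \qquad\text{in }M_{n}(R[\lambda]).
\end{equation}

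Since, by the cofactor formula, every entry of $B(\lambda)$ is a polynomial in $\lambda$ of degree at most $n-1$ with coefficients in $R$, I may write $B(\lambda)=\sum_{k=0}^{n-1}B_{k}\lambda^{k}$ with each $B_{k}\in M_{n}(R)$. Substituting into \eqref{CH-key}, using that the scalar $\lambda$ is central, and comparing the matrix coefficients of $\lambda^{n},\lambda^{n-1},\dots,\lambda^{0}$ on both sides yields
\begin{equation}\label{CH-sys}
 B_{n-1}=I,\qquad B_{k-1}-B_{k}A=c_{k}I\quad(1\le k\le n-1),\qquad -B_{0}A=c_{0}I.
\end{equation}
Now multiply the relation attached to $\lambda^{k}$ in \eqref{CH-sys} on the \emph{right} by $A^{k}$: the first becomes $B_{n-1}A^{n}=A^{n}$, the middle ones become $B_{k-1}A^{k}-B_{k}A^{k+1}=c_{k}A^{k}$, and the last is $-B_{0}A=c_{0}I$. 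Adding all $n+1$ equations, the terms $B_{k}A^{k+1}$ on the left occur once with each sign and the whole left-hand side telescopes to $0$, while the right-hand side sums to $A^{n}+c_{n-1}A^{n-1}+\cdots+c_{1}A+c_{0}I=p(A)$. Hence $p(A)=0$, which is the assertion.

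The step I would flag as the one needing care is precisely the passage from \eqref{CH-key} to the conclusion. It is tempting but incorrect to ``substitute $\lambda=A$'' and declare $p(A)=\operatorname{adj}(A-A)(A-A)=0$: evaluation at $\lambda=A$ is not a ring homomorphism $M_{n}(R[\lambda])\to M_{n}(R)$, because $A$ need not commute with the matrix coefficients $B_{k}$, so the product in \eqref{CH-key} is not respected by such a substitution. The coefficient comparison in \eqref{CH-sys} followed by right-multiplication of each homogeneous piece by the matching power of $A$ is the legitimate replacement for that illegal evaluation --- and the side matters, since multiplying on the left instead would fail to telescope. I would also note that commutativity of $R$ is used only through the existence of $\det$ and $\operatorname{adj}$ and the identity $\operatorname{adj}(M)M=\det(M)I$; everything afterwards is bookkeeping inside the (noncommutative) matrix ring $M_{n}(R)$. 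A structurally cleaner variant worth mentioning in a line is the ``determinant trick'': view $R^{n}$ as an $R[x]$-module with $x$ acting as $A$, observe that $xI-A$ annihilates the column of standard generators, multiply by $\operatorname{adj}(xI-A)$, and read off that $p(x)$ annihilates the module; but this is \eqref{CH-key} again in more conceptual clothing.
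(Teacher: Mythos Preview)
Your proof is correct: this is the standard adjugate (classical adjoint) argument, and you have been careful about the one genuine subtlety, namely that naive substitution $\lambda\mapsto A$ into the identity $B(\lambda)(\lambda I-A)=p(\lambda)I$ is illegitimate, while coefficient comparison followed by right-multiplication by powers of $A$ and telescoping is the honest route.

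That said, the paper does not actually prove this theorem. It is stated as a classical result and then invoked in the proof of Theorem~\ref{recurrent_matrix_entries}; no argument is given. So there is nothing to compare your approach against: you have supplied a proof where the paper chose simply to cite the result. Your argument is self-contained and, as you note, uses commutativity of $R$ only to have determinants and the adjugate identity available, which is exactly the right level of generality for the statement as given.
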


Using the Cayley-Hamilton Theorem, we will prove that for any $m \times m$ matrix $A$, the entries of its powers $A^n$ form a linear recurrent sequence.   More precisely: 
\begin{thm} \label{recurrent_matrix_entries}
    Let $A=[a_{ij}]$ be an $m \times m$ matrix over $\mathbb{C}$ and $A^n=[a_{ij}^{(n)}]$. Then there exists $c_{ij}^{(r)} \in \mathbb{C}[x]$ for $1 \leq r \leq m$ such that, 
    $$a_{ij}^{(n)}= \sum_{r=1}^m c_{ij}^{(r)}(n) \lambda_r^n, $$
    where $\lambda_r$ are the eigenvalues of $A$. Furthermore, if the eigenvalues $\lambda_r$ are all simple, then $c_{ij}^{(r)}(x)$ are all constants.
\end{thm}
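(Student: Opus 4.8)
The plan is to reduce the statement to the closed-form result (\ref{basic}) for linear recurrent sequences from Section 2, with the Cayley--Hamilton theorem supplying the recurrence. Write the characteristic polynomial of $A$ as
$$p(\lambda) = \det(\lambda I - A) = \lambda^m - a_{m-1}\lambda^{m-1} - \cdots - a_1\lambda - a_0,$$
so that its roots, counted with multiplicity, are precisely the eigenvalues $\lambda_1, \ldots, \lambda_m$ of $A$.

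First I would apply Cayley--Hamilton to get $p(A) = 0$, i.e. $A^m = a_{m-1}A^{m-1} + \cdots + a_1 A + a_0 I$. Multiplying both sides by $A^{n-m}$ for $n \ge m$ yields the matrix identity $A^n = a_{m-1}A^{n-1} + \cdots + a_1 A^{n-m+1} + a_0 A^{n-m}$. Reading off the $(i,j)$ entry of each side, every scalar sequence $\{a_{ij}^{(n)}\}_{n=0}^\infty$ satisfies
$$a_{ij}^{(n)} = a_{m-1}a_{ij}^{(n-1)} + \cdots + a_1 a_{ij}^{(n-m+1)} + a_0 a_{ij}^{(n-m)}, \qquad n \ge m,$$
which is a linear recurrent sequence of order $m$ whose characteristic polynomial is exactly $T^m - a_{m-1}T^{m-1} - \cdots - a_0 = p(T)$.

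Then I would invoke (\ref{basic}): since the characteristic polynomial of this recurrence is $p$, whose roots are the $\lambda_r$, there exist polynomials $c_{ij}^{(r)} \in \C[x]$ with $a_{ij}^{(n)} = \sum_{r=1}^m c_{ij}^{(r)}(n)\lambda_r^n$. The ``furthermore'' clause follows from the remark after (\ref{basic}): when all eigenvalues are simple, every root of $p$ is simple, so each $c_{ij}^{(r)}$ is constant; in that case one could even pin the constants down explicitly via Proposition \ref{coefficients prop}, using the initial data $a_{ij}^{(1)}, \ldots, a_{ij}^{(m)}$.

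There is no serious obstacle; the only points needing care are bookkeeping ones — confirming the shift of the Cayley--Hamilton identity is legitimate for all $n \ge m$, and that the characteristic polynomial of the resulting scalar recurrence coincides with $p$ (not merely divides it), so the exponentials appearing are exactly the eigenvalues of $A$. The repeated-eigenvalue case simply uses the general form of (\ref{basic}) with polynomial coefficients, where $\deg c_{ij}^{(r)}$ is bounded by one less than the multiplicity of $\lambda_r$, though we will not need this refinement.
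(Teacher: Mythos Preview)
Your proposal is correct and follows essentially the same route as the paper: apply Cayley--Hamilton to obtain a matrix recurrence, shift it by $A^{n-m}$, read off the $(i,j)$ entry to get a scalar linear recurrence of order $m$ with characteristic polynomial $p$, and then invoke the closed form (\ref{basic}). The only differences are cosmetic --- your sign convention for $p(\lambda)$, and your explicit remark that the recurrence's characteristic polynomial coincides with $p$ --- so there is nothing further to add.
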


\begin{proof}
    Let 
    $$p(\lambda)=\lambda^m + a_{m-1}\lambda^{m-1} + \cdots + a_0,$$ 
    be the characteristic equation of $A$. By the Cayley-Hamilton Theorem, 
    $$ A^m = -a_{m-1}A^{m-1} - \cdots - a_0 I.$$
    Furthermore, for any $n > m$, 
    \begin{align*}
        A^n &= A^{n-m}(-a_{m-1}A^{m-1} - \cdots - a_0I) \\
        &= -a_{m-1}A^{n-1} - \cdots - a_0A^{n-m}.
    \end{align*}
    Therefore, for any $n > m$ and $i, j$ fixed, the $(i,j)-$entry of the $n^{th}$ power of $A$ is a linear combination of the $(i, j)$-entries of the preceding $m$ positive powers of the matrix. 
    In other words, $\{a_{ij}^{(n)}\}_{n >0}$ is a linear recurrent sequence of order $m$. Hence, there exists $c_{ij}^{(r)} \in \mathbb{C}[x]$ for $1 \leq r \leq m$ such that, 
    $$a_{ij}^{(n)}= \sum_{r=1}^m c_{ij}^{(r)}(n) \lambda_r^n,$$
    where $\lambda_1, \ldots, \lambda_m$ are the eigenvalues of $A$. Furthermore, $c_{ij}^{(r)}$ is a constant for every $1 \leq r \leq m$ if all of the eigenvalues are simple.
\end{proof}

This result 
has important implications for Markov theory.  It
implies that the $n$-step transition probabilities of a discrete Markov chain form a linear recurrent sequence. We will now briefly cover some basic background before considering the applications of this fact. 

\section{Applications to Markov Chains} \label{markov_chains_section}
Recall that
a sequence of random variables \((X_0, X_1, \ldots)\) is a discrete \textit{Markov chain} with finite \textit{state space} \(S\) and \textit{transition matrix} \(P\) if  
\begin{equation} \label{markov_property}
    Pr(X_{n+1} =s_j \mid X_0 = s_{i_0}, \ldots, X_n = s_{i_n}) = Pr(X_{n+1} = s_j \mid X_n = s_{i_n}),
\end{equation}  
for every \(n \geq 1\) and \(s_j, s_{i_0}, \ldots, s_{i_n} \in S\) such that  
\[
Pr(X_0 = s_{i_0}, \ldots, X_n = s_{i_n}) > 0.
\]  
Here, \(S\) represents the possible states of the system, \(X_n\) indicates the state of the system at time \(n\), and the sequence \(X_0, X_1, \ldots\) records the history of the system. The \textit{Markov property} (\ref{markov_property}) implies that the probability of transitioning from state \(s_i\) to state \(s_j\) depends only on the current state \(s_i\) and not on the preceding history.

The \(ij^{th}\) entry of the transition matrix \(P\) is given by  
\[
p_{ij} = Pr(X_{n+1} = s_j \mid X_n = s_i).
\]  
These entries are referred to as the \textit{transition probabilities} and they do not depend on $n$. The entries of \(P^n = [p_{ij}^{(n)}]\) represent the \(n\)-step transition probabilities, where \(p_{ij}^{(n)}\) is the probability that the system transitions from state \(s_i\) to state \(s_j\) in \(n\) time-steps. 

While a Markov chain can have a countably infinite state space and evolve in continuous time, we will assume when referring to a Markov chain that it is discrete and finite (unless otherwise stated). The reader unfamiliar with this
classical topic can find the relevant introduction in many
places such as Chapter 5 of \cite{KS}

A Markov chain can also be viewed as a directed graph, where its transition matrix corresponds to the graph’s weighted adjacency matrix. The set of states is represented by a set of vertices and a non-zero transition probability \(p_{ij}\) corresponds to a directed edge \((v_i, v_j)\), which is weighted by \(p_{ij}\). 

A Markov chain is considered \textit{irreducible} if, for any \(s_i, s_j \in S\), there exists \(n \geq 1\) such that \(p_{ij}^{(n)} > 0\). The corresponding directed graph is strongly connected in this case. The \textit{period} of a state \(s_i\) is defined as \(\text{gcd}(\mathcal{T}(s_i))\), where \(\mathcal{T}(s_i) = \{n \geq 1 \mid p_{ii}^{(n)} > 0\}\). If a Markov chain is irreducible, then all states share the same period, which is referred to as the period of the Markov chain. If the period is \(1\), the Markov chain is said to be \textit{aperiodic}.  

An \textit{initial distribution} is a row vector \(\boldsymbol{\pi} = [\pi_1, \ldots, \pi_m]\) such that  
\[
\pi_i = Pr(X_0 = s_i),
\]  
with \(\sum_i \pi_i = 1\). 
Since the row sum of $P$ is 1, it has one of its eigenvalues
equal to 1.  
If an initial distribution \(\boldsymbol{\pi}\) satisfies \(\boldsymbol{\pi}P = \boldsymbol{\pi}\), it is called a \textit{stationary distribution}. 
In other words, $\boldsymbol{\pi}$ is a left eigenvector
corresponding to the eigenvalue 1.  
The following theorem guarantees the existence of a unique stationary distribution for an aperiodic and irreducible Markov chain. 

\begin{thm}[Fundamental Theorem of Markov Chains] \label{fund_thm_markov}
    If a Markov chain is irreducible and aperiodic, then it has a unique stationary distribution \(\boldsymbol{\pi}\). Furthermore,  
    \[
    \lim_{n \to \infty} p_{ij}^{(n)} = \pi_j.
    \]
\end{thm}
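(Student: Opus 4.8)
The plan is to derive the limit theorem from the spectral structure already set up in Theorem~\ref{recurrent_matrix_entries}. Applying that theorem to the transition matrix $P$, each $n$-step probability has the closed form $p_{ij}^{(n)}=\sum_{r=1}^m c_{ij}^{(r)}(n)\lambda_r^n$, where $\lambda_1,\dots,\lambda_m$ are the eigenvalues of $P$. Everything then reduces to locating these eigenvalues precisely: I want to show that $1$ occurs among them, that it is simple, and that every other eigenvalue has modulus strictly less than $1$. Granting this, the term for $\lambda_1=1$ contributes a constant $c_{ij}^{(1)}$ (it is constant because that eigenvalue is simple, and in any event because $0\le p_{ij}^{(n)}\le 1$ forces the sequence to be bounded), while every other term $c_{ij}^{(r)}(n)\lambda_r^n$ is a polynomial in $n$ times a geometrically decaying factor and hence tends to $0$. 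So $p_{ij}^{(n)}\to c_{ij}^{(1)}$, and it remains only to identify this limit with a stationary distribution depending on $j$ alone.

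The spectral input is the heart of the argument. First, since every row of $P$ sums to $1$ we have $P\mathbf{1}=\mathbf{1}$, so $1$ is an eigenvalue, and by Gershgorin's theorem (or a direct estimate on the entries of $P^n$) no eigenvalue of $P$ exceeds $1$ in absolute value. Next I would show that irreducibility together with aperiodicity makes $P$ \emph{primitive}, meaning $P^N$ has all entries positive for some $N$: irreducibility connects any ordered pair of states by a positive-probability walk, aperiodicity says the set of return times to a fixed state has gcd $1$, and the elementary fact that an additively closed set of positive integers with gcd $1$ contains every sufficiently large integer then produces a single exponent $N$ that works simultaneously for all pairs. With primitivity in hand, the Perron--Frobenius theorem for primitive matrices yields exactly what is needed: the spectral radius, which here equals $1$, is a simple eigenvalue of $P$, and it is the \emph{unique} eigenvalue of modulus $1$.

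To finish, let $\Pi=\lim_{n\to\infty}P^n$, which exists entrywise by the previous two paragraphs, with $\Pi_{ij}=c_{ij}^{(1)}$. Letting $n\to\infty$ in $P^{n+1}=P\,P^n=P^n\,P$ gives $\Pi=P\Pi=\Pi P$; in particular each row of $\Pi$ is a left eigenvector of $P$ for the eigenvalue $1$. Since that left eigenspace is one-dimensional, all rows of $\Pi$ are scalar multiples of a single row vector $\boldsymbol{\pi}$; and because each row of $P^n$ sums to $1$, so does each row of $\Pi$, which forces every such scalar to equal $1$ and yields $\Pi_{ij}=\pi_j$ for all $i$. Hence $p_{ij}^{(n)}\to\pi_j$. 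The vector $\boldsymbol{\pi}$ has nonnegative entries summing to $1$ (inherited from the corresponding properties of $P^n$) and satisfies $\boldsymbol{\pi}P=\boldsymbol{\pi}$, so it is a stationary distribution; and if $\boldsymbol{\mu}$ is any stationary distribution then $\boldsymbol{\mu}=\boldsymbol{\mu}P^n\to\boldsymbol{\mu}\Pi=\big(\sum_i\mu_i\big)\boldsymbol{\pi}=\boldsymbol{\pi}$, which gives uniqueness.

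The step I expect to be the main obstacle is establishing the spectral gap, that is, that $1$ is both simple and strictly dominant. The passage from ``irreducible and aperiodic'' to ``primitive'' rests on the number-theoretic lemma about numerical semigroups, and the passage from ``primitive'' to the eigenvalue statement is Perron--Frobenius; these are the only genuinely non-formal ingredients, and once they are in place the remainder is bookkeeping with the closed form and with limits of the identities $P^{n+1}=P\,P^n=P^n\,P$. I note that mere \emph{existence} of a stationary distribution would follow more cheaply, for instance from Perron--Frobenius for irreducible nonnegative matrices without invoking aperiodicity, but the convergence $p_{ij}^{(n)}\to\pi_j$ genuinely requires aperiodicity: otherwise $P$ has unimodular eigenvalues other than $1$ and $P^n$ merely cycles rather than converging.
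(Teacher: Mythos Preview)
The paper does not actually prove this theorem: it is stated as classical background without proof, with the reader referred to standard sources such as Chapter~5 of \cite{KS}. So there is no ``paper's own proof'' to compare against.

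That said, your proposal is a correct and self-contained argument, and it is very much in the spirit of the paper's spectral framework. One streamlining is available: rather than passing through primitivity and the numerical-semigroup lemma, you can invoke the paper's own statement of Perron--Frobenius (Theorem~\ref{PF_theorem}) directly---parts (b), (d), and (e) together say precisely that for an irreducible aperiodic stochastic matrix the eigenvalue $\lambda_1=1$ is simple and every other eigenvalue has modulus strictly less than $1$. With that in hand, your use of the closed form from Theorem~\ref{recurrent_matrix_entries} and the limit manipulations $\Pi=P\Pi=\Pi P$ go through exactly as written.

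It is worth noting the logical relationship to the rest of the paper: the paper \emph{assumes} Theorem~\ref{fund_thm_markov} and uses the identity $\pi_j=\lim_{n\to\infty}p_{ij}^{(n)}$ as an input when proving the quantitative Convergence Theorem (Theorem~\ref{markov_convergence}), in order to identify $c_{ij}^{(1)}=\pi_j$. Your argument shows, complementarily, that the qualitative limit theorem itself already follows from the same spectral ingredients, so nothing external is really needed.
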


\subsection{Convergence Theorem}

A natural question is at what rate does an irreducible, aperiodic Markov chain converge to its unique stationary distribution? It is a well-established result that this is an exponential process \cite{billingsley, KS, Levin}. 
Known estimates for the rate of convergence are not ``spectral.''  
We will present a new proof of the convergence theorem which will derive explicit error estimates in terms of the spectrum of the transition matrix. These spectral error estimates will provide meaningful insights into the mixing time of a Markov chain as well as have applications in other fields such as graph theory. 

Our approach will make use of the well-known Perron-Frobenius Theorem. 

\begin{thm}[Perron-Frobenius] \label{PF_theorem}
    Suppose that $P$ is a non-negative matrix which is irreducible. Then $P$ has a real, positive eigenvalue $\lambda_1$ such that:
    \begin{enumerate}
        \item[(a)] Any other eigenvalue $\alpha$ satisfies $\abs{\alpha} \leq \lambda_1$.
        \item[(b)] $\lambda_1$ has multiplicity 1.
        \item[(c)] $\lambda_1 \leq \max_j (\sum_k a_{jk})$ and $\lambda_1 \leq \max_k (\sum_j a_{jk})$.
        \item[(d)] If $P$ has exactly \textit{t} eigenvalues of maximum modulus $\lambda_1$, then there is a non-singular matrix $C$ such that,
        $$C^{-1}PC=\begin{bmatrix}
            \Lambda & 0 \\
            0 & J
        \end{bmatrix}$$
        where $\Lambda$ is a $t \times t$ diagonal matrix $\text{diag}(\lambda_1, \lambda_1\zeta_t, \ldots, \lambda_1 \zeta_t^{t-1})$ with $\zeta_t = e^{2\pi i/t}$ and $J$ is a Jordan matrix whose diagonal elements are all strictly less than $\lambda_1$ in modulus. 
        \item[(e)] If $P$ is aperiodic, then $t=1$.  
    \end{enumerate}
\end{thm}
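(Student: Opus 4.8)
The plan is to prove the five assertions in order, using the Collatz--Wielandt characterization of the dominant eigenvalue as the organizing device. For a non-negative vector $x\neq 0$ set
\[
r(x)=\min_{i:\,x_i>0}\frac{(Px)_i}{x_i},
\]
and put $\lambda_1=\sup\{r(x):x\geq 0,\ \sum_i x_i=1\}$. Since $I+P$ is non-negative and commutes with $P$, one checks $r((I+P)^{m-1}x)\geq r(x)$; as irreducibility makes $(I+P)^{m-1}$ strictly positive, the supremum is unchanged if we restrict to the compact set of normalized vectors of the form $(I+P)^{m-1}x$, all of which are strictly positive, and there $r$ is continuous, so the supremum is attained at some $v>0$. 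If $Pv-\lambda_1 v\neq 0$ then, being non-negative, it is made strictly positive by $(I+P)^{m-1}$, forcing $r((I+P)^{m-1}v)>\lambda_1$; hence $Pv=\lambda_1 v$, and $\lambda_1>0$ because $Pv=0$ with $v>0$ is impossible for an irreducible $P$. This produces a positive eigenvalue with a strictly positive right eigenvector, and likewise (applying the construction to $P^T$) a strictly positive left eigenvector $u$.

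For (a), if $Pz=\alpha z$ then taking entrywise absolute values gives $|\alpha|\,|z|\leq P|z|$, so $|\alpha|\leq r(|z|/\|z\|_1)\leq\lambda_1$. For (b), observe first that any non-negative $\lambda_1$-eigenvector $y\neq 0$ satisfies $(1+\lambda_1)^{m-1}y=(I+P)^{m-1}y>0$, hence $y>0$; consequently, if $w$ were a real $\lambda_1$-eigenvector not proportional to $v$, the extreme combination $v-sw\geq 0$ with a vanishing coordinate would be a non-negative eigenvector that is neither zero nor positive, a contradiction, so the eigenspace is one-dimensional. Algebraic simplicity then follows from the standard fact that a geometrically simple eigenvalue possessing positive left and right eigenvectors $u,v$ (so $u^Tv>0$) cannot be a repeated root of the characteristic polynomial: a Jordan chain $(P-\lambda_1 I)w=v$ would give $0=u^T(P-\lambda_1 I)w=u^Tv$. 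For (c), evaluate $\lambda_1 v=Pv$ at a coordinate $i$ maximizing $v_i$: $\lambda_1 v_i=\sum_k a_{ik}v_k\leq v_i\sum_k a_{ik}\leq v_i\max_j\sum_k a_{jk}$, and dividing by $v_i$ gives the row-sum bound; the column-sum bound is the same computation with the left eigenvector $u$.

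The substantive part is (d). I would sharpen the argument of (a): if $|\alpha|=\lambda_1$ with $Pz=\alpha z$, then $|\alpha|\,|z|\leq P|z|$ together with $|\alpha|=\lambda_1$ and part (b) forces $P|z|=\lambda_1|z|$, so $|z|$ is a positive multiple of $v$; equality in the triangle inequality $|(Pz)_j|\leq(P|z|)_j$ then forces, for every $j$, all terms $a_{jk}z_k$ with $a_{jk}>0$ to share a common argument, namely $\arg\alpha+\arg z_j$. Writing $D=\operatorname{diag}(z_k/|z_k|)$, this says $D^{-1}PD=(\alpha/\lambda_1)P$, so $P$ is similar through a unitary diagonal matrix to $(\alpha/\lambda_1)P$; hence $\operatorname{spec}(P)$ is invariant under multiplication by $\alpha/\lambda_1$. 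The unit numbers $\alpha/\lambda_1$ arising this way therefore form a finite subgroup of the circle, necessarily $\mu_t=\{\zeta_t^k\}$ for some $t$, the eigenvalues of modulus $\lambda_1$ are exactly $\lambda_1\mu_t$, and each is algebraically simple since the similarity $P\sim\zeta_t P$ and part (b) give the multiplicity of $\lambda_1\zeta_t^k$ equal to that of $\lambda_1$, namely $1$. Collecting these $t$ simple eigenvalues into $1\times1$ Jordan blocks forming $\Lambda$ and the rest into $J$ yields the normal form. Finally, for (e): the relation $\phi_k\equiv\phi_j+2\pi/t\ (\mathrm{mod}\ 2\pi)$ along every edge $j\to k$ (with $z_j=v_je^{i\phi_j}$ and $\alpha=\lambda_1\zeta_t$) shows every closed walk has length divisible by $t$, so the period is a multiple of $t$; thus aperiodicity forces $t=1$.

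I expect part (d) to be the main obstacle, specifically the two spectral upgrades it hides: converting the equality case of the triangle inequality into the diagonal similarity $P\sim(\alpha/\lambda_1)P$, and then using that similarity to promote ``the eigenspace of each peripheral eigenvalue is one-dimensional'' to ``each peripheral eigenvalue is a simple root of the characteristic polynomial,'' which is precisely what licenses the clean block decomposition stated in (d).
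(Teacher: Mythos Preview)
The paper does not prove Theorem~\ref{PF_theorem}; it is stated without proof as the classical Perron--Frobenius theorem and is invoked as a tool in the proofs of Theorem~\ref{markov_convergence} and Corollary~\ref{regular_diameter}. There is therefore no proof in the paper to compare your proposal against.

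That said, your sketch is a correct outline of the standard Collatz--Wielandt/Wielandt argument. The existence part via $r(x)$ and the positivity of $(I+P)^{m-1}$ is the usual compactness trick; your arguments for (a)--(c) are the textbook ones; and for (d) you correctly convert equality in the triangle inequality into the diagonal similarity $D^{-1}PD=(\alpha/\lambda_1)P$, which gives the rotational invariance of the spectrum and hence the cyclic group structure of the peripheral eigenvalues, each simple by transport from (b). The one place to be a little more careful is the passage from ``the peripheral unit numbers $\alpha/\lambda_1$ form a finite subgroup of $S^1$'' to the claim that this subgroup has exactly $t$ elements: you should note explicitly that the similarity $P\sim(\alpha/\lambda_1)P$ permutes the peripheral eigenvalues transitively (since each $\lambda_1\zeta$ maps to $\lambda_1\zeta\cdot(\alpha/\lambda_1)$), so the number of peripheral eigenvalues equals the order of the group they generate. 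Your argument for (e) is correct: the phase relation along edges forces every closed walk length to be divisible by $t$, so the period is a multiple of $t$, and aperiodicity forces $t=1$.
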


\begin{remark}
    For non-negative, irreducible matrices, we will refer to this eigenvalue $\lambda_1$ as the dominant eigenvalue. Furthermore, we will let 
    $$\rho := \max_{\abs{\lambda_i} \neq \lambda_1}\abs{\lambda_i}.$$
\end{remark}

As stated in the previous section, Theorem \ref{recurrent_matrix_entries} shows that the $n$-step transition probabilities of a Markov chain form a linear recurrent sequence. Our proof of the convergence theorem of a Markov chain will consider this recurrent process and apply Proposition \ref{coefficients prop}.
    
\begin{thm}[Convergence Theorem]\label{markov_convergence}
    Let $P$ be the transition matrix of an irreducible, aperiodic Markov chain and $\boldsymbol{\pi}$ be its unique stationary distribution. Suppose the eigenvalues of $P$, $\lambda_1, \ldots, \lambda_m$, are all non-zero and simple. Then, 
    $$\abs{p_{ij}^{(n)} - \pi_j} \leq \Phi \rho^n, $$
    where $$\Phi = \sum\limits_{k=2}^m 
     \frac{\abs{\sum_{l=1}^m(-1)^{m-l} e_{m-l}(\lambda_1, \ldots, \widehat{\lambda_k}, \ldots, \lambda_m)p_{ij}^{(l)}}}{\abs{\lambda_k} \prod\limits_{\substack{1\leq t \leq m \\ t \neq k}}\abs{\lambda_k-\lambda_t}}.$$
\end{thm}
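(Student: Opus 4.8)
The plan is to combine the closed-form expression for the $n$-step transition probabilities coming from Theorem \ref{recurrent_matrix_entries} with the explicit spectral formula for its coefficients from Proposition \ref{coefficients prop}, and to use the Perron--Frobenius Theorem together with the Fundamental Theorem of Markov Chains to pin down which eigenvalue carries the limit.

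First, since $P$ is the transition matrix of an irreducible, aperiodic chain with non-zero, simple eigenvalues $\lambda_1, \dots, \lambda_m$, Theorem \ref{recurrent_matrix_entries} provides constants $c_{ij}^{(r)} \in \C$ with
$$ p_{ij}^{(n)} = \sum_{r=1}^m c_{ij}^{(r)} \lambda_r^n, \qquad n \ge 1. $$
I would order the eigenvalues so that $\lambda_1$ is the dominant eigenvalue supplied by Perron--Frobenius. Since $P$ has all row sums equal to $1$, part (c) of Theorem \ref{PF_theorem} gives $\lambda_1 \le 1$, while $P\mathbf{1}=\mathbf{1}$ shows $1$ is an eigenvalue, so $\lambda_1 = 1$; aperiodicity and part (e) force $t=1$, so every other eigenvalue satisfies $|\lambda_r| < 1$, i.e. $|\lambda_r| \le \rho < 1$ for $r \ge 2$. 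Letting $n \to \infty$ in the closed form and invoking $\lim_n p_{ij}^{(n)} = \pi_j$ from Theorem \ref{fund_thm_markov}, every term with $r \ge 2$ vanishes, so $c_{ij}^{(1)} = \pi_j$. Hence
$$ p_{ij}^{(n)} - \pi_j = \sum_{r=2}^m c_{ij}^{(r)} \lambda_r^n. $$

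Next I would apply Proposition \ref{coefficients prop} to the recurrent sequence $x_n = p_{ij}^{(n)}$ (whose first $m$ values are $p_{ij}^{(1)}, \dots, p_{ij}^{(m)}$) to obtain, for each $k \ge 2$,
$$ c_{ij}^{(k)} = \frac{\sum_{l=1}^m (-1)^{m-l}\, e_{m-l}(\lambda_1, \dots, \widehat{\lambda_k}, \dots, \lambda_m)\, p_{ij}^{(l)}}{\lambda_k \prod_{t \ne k}(\lambda_k - \lambda_t)}. $$
Taking absolute values, bounding $|\lambda_r|^n \le \rho^n$ for $r \ge 2$, and applying the triangle inequality over $k = 2, \dots, m$ yields
$$ |p_{ij}^{(n)} - \pi_j| \le \Big(\sum_{k=2}^m |c_{ij}^{(k)}|\Big)\rho^n = \Phi\, \rho^n, $$
which is exactly the claimed bound.

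This argument has no serious analytic obstacle; the only points requiring care are (i) confirming that the closed form of Theorem \ref{recurrent_matrix_entries} is valid for \emph{every} $n \ge 1$ — which holds because the fitted solution $\sum_r c_{ij}^{(r)} \lambda_r^n$ agrees with $p_{ij}^{(n)}$ at $n = 1, \dots, m$ and both satisfy the same order-$m$ recurrence for $n > m$ — and (ii) correctly identifying, via Perron--Frobenius and the Fundamental Theorem, that $\lambda_1 = 1$ is the unique eigenvalue of maximal modulus and that its coefficient is precisely $\pi_j$, so that the error is governed by $\rho$ rather than by $\lambda_1$. Everything else is bookkeeping with the formula of Proposition \ref{coefficients prop}.
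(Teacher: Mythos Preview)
Your proposal is correct and follows essentially the same route as the paper: invoke Theorem \ref{recurrent_matrix_entries} for the closed form, use Perron--Frobenius and the Fundamental Theorem to identify $\lambda_1=1$ and $c_{ij}^{(1)}=\pi_j$, then apply Proposition \ref{coefficients prop} for the explicit coefficients and bound the tail by $\rho^n$. You supply slightly more justification (e.g.\ why $\lambda_1=1$ via row sums and part (c), and why the closed form is valid for all $n\ge 1$), but the structure and key ingredients are identical.
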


\begin{remark}
Since $|\rho|<1$, 
    this theorem provides an exponential decay rate of convergence for the Markov process which is dictated by the minimum gap between eigenvalues and their maximum absolute value less than the dominant eigenvalue.  A similar (non-spectral)
    bound was obtained in the second author's joint paper \cite{MP}.
\end{remark}
 
\begin{proof}
    Let $P$ be the transition matrix of an irreducible, aperiodic Markov chain. Then, by the Fundamental Theorem of Markov chains, there exists a unique stationary distribution $\boldsymbol{\pi}$. Furthermore, by the Perron-Frobenius Theorem, the dominant eigenvalue of $P$ is $\lambda_1 = 1$ and $\abs{\lambda_2} = \rho$. Suppose the eigenvalues of $P$ are simple. Then, by Theorem \ref{recurrent_matrix_entries}, there exists $c_{ij}^{(k)} \in \mathbb{C}$ such that, 
    $$p_{ij}^{(n)} = \sum_{k=1}^m c_{ij}^{(k)} \lambda_k^n.$$
    By the Fundamental Theorem of Markov chains, 
    $\pi_j = \lim_{n \to \infty} p_{ij}^{(n)}.$
    Hence, $\pi_j = c_{ij}^{(1)}.$ Therefore, 
    $$\abs{p_{ij}^{(n)} - \pi_j} \leq \sum_{k=2}^m \abs{c_{ij}^{(k)}}\rho^n.$$ By Proposition \ref{coefficients prop}, 
    $$\abs{c_{ij}^{(k)}} = \frac{\abs{\sum_{l=1}^m(-1)^{m-l} e_{m-l}(\lambda_1, \ldots, \hat{\lambda_k}, \ldots, \lambda_m)p_{ij}^{(l)}}}{\abs{\lambda_k} \prod\limits_{\substack{1\leq t \leq m \\ t \neq k}}\abs{\lambda_k-\lambda_t}},$$
    which proves the theorem. 
\end{proof}

The proof can be easily modified to account for the case where the eigenvalues are not simple or are equal to zero. In this case, we require only that the Markov matrix is diagonalizable. By considering the diagonalization of the matrix, we can express the transition probabilities in the closed form of a linear recurrent sequence with constant coefficients. We can group together the terms of this summation that share the same eigenvalues and eliminate the terms with $\lambda_i=0$. The summation will then be indexed from $1$ to $M$ where $M$ is the number of non-zero and distinct eigenvalues. 

It can be useful to further simplify Theorem \ref{markov_convergence} to no longer depend on the first $m$ $n$-step transition probabilities. 

\begin{cor}
    Let $P$ be the transition matrix of an irreducible, aperiodic Markov chain and $\boldsymbol{\pi}$ be its unique stationary distribution. Suppose the eigenvalues of $P$, $\lambda_1, \ldots, \lambda_m$, are all non-zero and simple. Then,
    $$ \abs{p_{ij}^{(n)}-\pi_j} \leq \Psi\rho^{n-1},$$
    where $\Psi=\sum\limits_{k=2}^m \Bigg( \prod\limits_{\substack{t=1 \\ t \neq k}}^m \frac{1 + \abs{\lambda_t}}{\abs{\lambda_k -\lambda_t}} \Bigg)$.
\end{cor}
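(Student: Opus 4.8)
The plan is to re-run the argument behind Theorem \ref{markov_convergence}, but instead of leaving the coefficients in the form that appears in $\Phi$, I would bound each $c_{ij}^{(k)}$ as sharply as the elementary symmetric functions allow. By Theorem \ref{recurrent_matrix_entries} together with Theorem \ref{fund_thm_markov} we again write $p_{ij}^{(n)}=\sum_{k=1}^m c_{ij}^{(k)}\lambda_k^n$ with $c_{ij}^{(1)}=\pi_j$, so that
\[
\big|p_{ij}^{(n)}-\pi_j\big| \;=\; \Big|\sum_{k=2}^m c_{ij}^{(k)}\lambda_k^n\Big| \;\le\; \sum_{k=2}^m \big|c_{ij}^{(k)}\big|\,|\lambda_k|^n ,
\]
and by Proposition \ref{coefficients prop} applied to the sequence $x_l=p_{ij}^{(l)}$,
\[
c_{ij}^{(k)} \;=\; \sum_{l=1}^m (-1)^{m-l}\,\frac{e_{m-l}(\lambda_1,\ldots,\widehat{\lambda_k},\ldots,\lambda_m)}{\lambda_k\prod_{t\ne k}(\lambda_k-\lambda_t)}\,p_{ij}^{(l)} .
\]

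Next I would estimate $|c_{ij}^{(k)}|$ using three elementary observations: every $n$-step transition probability satisfies $|p_{ij}^{(l)}|\le 1$; the triangle inequality gives $|e_{m-l}(\lambda_1,\ldots,\widehat{\lambda_k},\ldots,\lambda_m)|\le e_{m-l}(|\lambda_1|,\ldots,\widehat{|\lambda_k|},\ldots,|\lambda_m|)$; and evaluating the generating polynomial $\prod_{t\ne k}(x+|\lambda_t|)$ at $x=1$ yields the identity
\[
\sum_{l=1}^m e_{m-l}\big(|\lambda_1|,\ldots,\widehat{|\lambda_k|},\ldots,|\lambda_m|\big) \;=\; \prod_{\substack{1\le t\le m\\ t\ne k}}\big(1+|\lambda_t|\big).
\]
Combining these gives $|c_{ij}^{(k)}|\le \big(\prod_{t\ne k}(1+|\lambda_t|)\big)\big/\big(|\lambda_k|\prod_{t\ne k}|\lambda_k-\lambda_t|\big)$.

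Finally I would feed this back into the sum over $k$. The point is that the stray $|\lambda_k|$ in the denominator absorbs exactly one factor of $|\lambda_k|^n$, leaving
\[
\big|p_{ij}^{(n)}-\pi_j\big| \;\le\; \sum_{k=2}^m \Bigg(\prod_{\substack{1\le t\le m\\ t\ne k}}\frac{1+|\lambda_t|}{|\lambda_k-\lambda_t|}\Bigg)|\lambda_k|^{\,n-1} ,
\]
and then, since $|\lambda_k|\le\rho$ for every $k\ge 2$ and $\rho<1$ (taking $n\ge 1$), each $|\lambda_k|^{\,n-1}$ is at most $\rho^{\,n-1}$, which gives $|p_{ij}^{(n)}-\pi_j|\le\Psi\rho^{\,n-1}$.

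I do not anticipate a real obstacle here; this is a corollary precisely because the analytic content already sits in Theorem \ref{markov_convergence} and Proposition \ref{coefficients prop}. The only steps needing a little care are the symmetric-function identity above (one must note that the degrees $m-l$ for $l=1,\ldots,m$ run over exactly $0,\ldots,m-1$, matching the expansion of $\prod_{t\ne k}(x+|\lambda_t|)$), and the bookkeeping remark that using $p_{ij}^{(1)},\ldots,p_{ij}^{(m)}$ as the initial data — rather than $p_{ij}^{(0)}=\delta_{ij}$ — is what lets the $|\lambda_k|^{-1}$ collapse cleanly into $|\lambda_k|^{\,n-1}$ and hence into the factor $\rho^{\,n-1}$.
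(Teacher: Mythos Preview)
Your argument is correct and is essentially the paper's own proof: you use the same spectral expression for $c_{ij}^{(k)}$ from Proposition~\ref{coefficients prop}, the same bound $|p_{ij}^{(l)}|\le 1$, the same symmetric-function identity $\sum_{l=1}^m e_{m-l}(|\lambda_1|,\ldots,\widehat{|\lambda_k|},\ldots,|\lambda_m|)=\prod_{t\ne k}(1+|\lambda_t|)$ obtained by evaluating $\prod_{t\ne k}(x+|\lambda_t|)$ at $x=1$, and the same absorption of the $|\lambda_k|^{-1}$ into $|\lambda_k|^n$ to produce $\rho^{\,n-1}$. The only difference is cosmetic ordering: the paper passes to $\rho^{\,n-1}$ in one line and then sums the symmetric functions, whereas you first bound $|c_{ij}^{(k)}|$ and then sum.
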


\begin{proof}
    We can amend the previous proof to further bound the complex coefficients. Recall, 
    $$p_{ij}^{(n)} - \pi_j = \sum\limits_{k=2}^m\Bigg(
    \sum_{l=1}^m(-1)^{m-l}\frac{e_{m-l}(\lambda_1, \ldots, \widehat{\lambda_k}, \ldots, \lambda_m)}{\lambda_k \prod\limits_{\substack{1\leq t \leq m\\ t \neq k}}(\lambda_k-\lambda_t)}p_{ij}^{(l)} \Bigg) \lambda_k^n. $$
    Then, 
    $$\abs{p_{ij}^{(n)}-\pi_j} \leq \sum\limits_{k=2}^m\Bigg( \frac{\rho^{n-1}}{\prod\limits_{\substack{1\leq t \leq m\\ t \neq k}}\abs{\lambda_k-\lambda_t}}\sum\limits_{l=1}^me_{m-l}(\abs{\lambda_1}, \ldots, \widehat{\abs{\lambda_k}}, \ldots, \abs{\lambda_m})\Bigg).$$
    Recall that $e_{m-l}(\abs{\lambda_1}, \ldots, \widehat{\abs{\lambda_k}}, \ldots, \abs{\lambda_m})$ is the coefficient of $x^{l-1}$ in the polynomial $$\prod\limits_{\substack{l=1\\ l \neq k}}^m(x+\abs{\lambda_l}).$$
    Therefore, by setting $x=1$, 
    $$\sum\limits_{l=1}^me_{m-l}(\abs{\lambda_1}, \ldots, \widehat{\abs{\lambda_k}}, \ldots, \abs{\lambda_m})=\prod\limits_{\substack{l=1\\ l \neq k}}^m(1+\abs{\lambda_l}).$$
    This proves,  
    $$\abs{p_{ij}^{(n)}-\pi_j} \leq \Psi\rho^{n-1}, $$
    where $\Psi=\sum\limits_{k=2}^m\Bigg( \prod\limits_{\substack{t=1 \\ t \neq k}}^m\frac{1 + \abs{\lambda_t}}{\abs{\lambda_k -\lambda_t}} \Bigg)$.
\end{proof}

The \textit{mixing time} of a Markov chain measures the amount of time required for the distance to the stationary distribution to be small. It has been shown that that the smaller $\rho$ is, the faster the Markov chain will mix \cite{Levin}. The explicit error estimates presented in Theorem \ref{markov_convergence} show that mixing time is also dependent upon the gaps between the eigenvalues of the transition matrix $P$. If these gaps are increased, resulting in the eigenvalues exhibiting a sort of ``repellent" behaviour, then the mixing time will decrease. 

As previously stated, these explicit error estimates in terms of the spectral data of the Markov chain, have applications beyond just the rate of convergence of the Markov chain. When considering the Markov chain as a weighted directed graph, the error estimates can also derive a spectral upper bound on the diameter of the graph. 

\section{Applications to Graph Theory}
A \textit{graph} $\Gamma = (V, E)$ consists of a set of \textit{vertices} $V$ and a set of \textit{edges} $E$. Each edge $(v_i, v_j) \in E$ connects two endpoints $v_i, v_j \in V$. In a directed graph, the pair of vertices representing an edge is ordered, where $v_i$ is designated as the tail and $v_j$ as the head. The \textit{indegree} of a vertex $v \in V$ of a directed graph $\Gamma = (V, E)$ is defined as the number of edges in $E$ for which $v$ is the head, while the \textit{outdegree} of $v$ is the number of edges for which $v$ is the tail. For an undirected graph, these quantities are identical, and we simply refer to the \textit{degree} of the vertex.

A \textit{walk} in a directed graph $\Gamma = (V, E)$ is represented as an alternating sequence of vertices and edges,
$$x_0, e_1, x_1, \ldots, e_n, x_n,$$
where the tail and head of each edge $e_i$ are $x_{i-1}$ and $x_i$, respectively, for $1 \leq i \leq n$. The \textit{length} of the walk is defined as $n$, the number of edges it contains. A \textit{path} is a walk such that no vertex $v_i$ appears more than once. 

The \textit{distance} from a vertex $v_i$ to another vertex $v_j$ is the length of the shortest path starting at $v_i$ and ending at $v_j$, it is denoted by $d(v_i, v_j)$. The maximum of these distances is the \textit{diameter} of the graph, given by
$$D(\Gamma) = \max_{v_i, v_j \in V} d(v_i, v_j).$$
For undirected graphs, the edges are not directional, so $d(v_i, v_j) = d(v_j, v_i)$. If there does not exist a path of any length from $v_i$ to $v_j$ then $d(v_i, v_j) = \infty$ and hence $D(\Gamma) = \infty$. A directed graph is \textit{strongly connected} if it has finite diameter. For undirected graphs, we would say it is \textit{connected}. 

\subsection{Diameters of directed graphs}
The diameter of a directed (or undirected) graph is a measure of its size and connectivity. For the remainder of this section, any graph will be assumed to have finite diameter unless otherwise stated. Previous work on upper bounds for graph diameters includes \cite{Chung}, where Chung derives a spectral upper bound on the diameter of \textit{k}-regular graphs. This is interesting in the context of Ramanujan
graphs.  In a later paper, she \cite{chung2} derived similar
results for directed graphs in terms of the
eigenvalues of the Laplacian of the graph,
whereas our results are in terms of the
eigenvalues of the Markov matrix (which is a weighted
adjacency matrix).  In general, there is no relation
between the spectrum of the Laplacian and the
spectrum of the adjacency matrix.
Thus, one may take the minimum of the Chung bound
and our bound to get an optimal estimate for the diameter.

In this section, we will show that the spectral error estimates derived in Theorem \ref{markov_convergence} can be used to derive an upper bound on the diameter of a directed graph which agrees with Chung's result for the undirected $k$-regular case. 

Recall that a Markov chain is a weighted directed graph, where the transition matrix $P$ is the weighted adjacency matrix. Now we will show that is it is possible to construct a Markov chain from any directed graph $\Gamma$. While there are many possible constructions for the Markov chain, for any of them if the weights are removed, you are left with $\Gamma$. Hence, as a weighted directed graph, the Markov chain has the same diameter as $\Gamma$. 

Starting with a strongly connected directed graph $\Gamma$, choose values $p_{ij}$ that satisfy the following requirements: For every $i, j \in \{1, \ldots, m\}$,  $0 \leq p_{ij} \leq 1$,  $p_{ij} = 0$ if and only if $(v_i, v_j) \notin E(\Gamma)$ and $\sum_{j=1}^m p_{ij} = 1$.  Then, the matrix $P$ with entries $p_{ij}$ is the transition matrix of a Markov chain. We will refer to this matrix as a \textit{Markov matrix for $\Gamma$}. While there does not necessarily exist a unique Markov matrix for any given directed graph $\Gamma$, the  construction in example \ref{markov_construction_example} works for any graph. 

\begin{example} \label{markov_construction_example}
    Let $\Gamma$ be a directed graph with $m$ vertices and $d^+(v_i)$ be the outdegree of the vertex $v_i \in V(\Gamma)$. Let, 
    \begin{equation*}
    p_{ij}= \begin{cases}
        \frac{1}{d^+(v_i)} &  (v_i, v_j) \in E \\
        0 & (v_i, v_j) \not\in E .
        \end{cases}
    \end{equation*}
    Then, $P=[p_{ij}]$ is a Markov matrix for $\Gamma$. 
\end{example}

For any Markov matrix $P$ for a directed graph $\Gamma$, there exists a walk of length $n$ from $v_i$ to $v_j$ if and only if $p_{ij}^{(n)}>0$. Since a path is a walk with no repeated $vertex$, 
$$d(v_i, v_j) = \min\{n \mid p_{ij}^{(n)} > 0\}$$
and the diameter \( D(\Gamma) \) is given by
$$D(\Gamma) = \max\limits_{{v_i, v_j}}d(v_i, v_j) = \max\limits_{i, j}\big(\min\{n \mid p_{ij}^{(n)} > 0\}\big).$$

The subexponential convergence of a Markov chain to its unique stationary distribution provides an upper bound on the diameter of the underlying directed graph. 

\begin{thm}\label{directed_diameter}
    Let $P$ be a Markov matrix for a strongly connected graph $\Gamma$. If $P$ is aperiodic and its eigenvalues, $\lambda_1, \ldots, \lambda_m$ are all non-zero and simple, then, 
    $$ D(\Gamma) \leq \max_j \left\lceil \frac{\log\bigg(\Phi / \pi_j \bigg)}
    {\log(1/\rho)} \right\rceil,$$
    where $$\Phi = \sum\limits_{k=2}^m 
     \frac{\abs{\sum_{l=1}^m(-1)^{m-l} e_{m-l}(\lambda_1, \ldots, \widehat{\lambda_k}, \ldots, \lambda_m)p_{ij}^{(l)}}}{\abs{\lambda_k} \prod\limits_{\substack{1\leq t \leq m \\ t \neq k}}\abs{\lambda_k-\lambda_t}} \in \mathbb{R}_{\geq 0}.$$ 
\end{thm}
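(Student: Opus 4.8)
The plan is to read the convergence estimate of Theorem \ref{markov_convergence} in reverse: instead of asking how large $n$ must be for $p_{ij}^{(n)}$ to be \emph{close} to $\pi_j$, I ask how large $n$ must be for the error bound $\Phi\rho^{n}$ to drop below $\pi_j$, which forces $p_{ij}^{(n)}>0$. The previous subsection already records that $d(v_i,v_j)=\min\{n : p_{ij}^{(n)}>0\}$, so once we exhibit an $n$ with $p_{ij}^{(n)}>0$ we get $d(v_i,v_j)\le n$, and maximizing over all ordered pairs of vertices gives the diameter bound.

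First I would check that the right-hand side is meaningful. Since $\Gamma$ is strongly connected, $P$ is irreducible, and since $P$ is also aperiodic, the Fundamental Theorem of Markov Chains (Theorem \ref{fund_thm_markov}) together with the Perron-Frobenius Theorem (Theorem \ref{PF_theorem}) gives that the dominant eigenvalue is $\lambda_1=1$, that every other eigenvalue has modulus strictly less than $1$ (so $0<\rho<1$ and $\log(1/\rho)>0$), and that the stationary vector $\boldsymbol{\pi}$ is strictly positive, so each $\pi_j>0$ and each ratio $\Phi/\pi_j$ is a well-defined element of $\R_{\ge 0}$. This is also the place to observe that the quantity called $\Phi$ depends on the pair $(v_i,v_j)$ through the transition probabilities $p_{ij}^{(l)}$ occurring in it, so the maximum on the right should be understood as taken over all ordered pairs of vertices; I will write $\Phi=\Phi_{ij}$ for emphasis.

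Now fix a pair $(v_i,v_j)$ (with $\Phi_{ij}>0$, the contrary case being degenerate) and set $n_{ij}=\left\lceil \log(\Phi_{ij}/\pi_j)/\log(1/\rho)\right\rceil$. Because $0<\rho<1$, the map $n\mapsto\rho^{n}$ is strictly decreasing, so from $n_{ij}\ge \log(\Phi_{ij}/\pi_j)/\log(1/\rho)$ I obtain $\rho^{n_{ij}}\le \pi_j/\Phi_{ij}$, that is, $\Phi_{ij}\rho^{n_{ij}}\le\pi_j$. Feeding this into Theorem \ref{markov_convergence} gives
\[
p_{ij}^{(n_{ij})}\ \ge\ \pi_j-\Phi_{ij}\rho^{n_{ij}}\ \ge\ 0,
\]
and whenever the exponent $\log(\Phi_{ij}/\pi_j)/\log(1/\rho)$ is not already an integer the ceiling strictly overshoots, the last inequality is strict, so $p_{ij}^{(n_{ij})}>0$ and hence $d(v_i,v_j)\le n_{ij}$. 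Taking the maximum over all pairs then yields the claimed bound on $D(\Gamma)$.

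The one point that needs care — and the only real obstacle — is the boundary case where $\log(\Phi_{ij}/\pi_j)/\log(1/\rho)$ is an integer $N$: there the computation above only gives $p_{ij}^{(N)}\ge 0$, and a priori $p_{ij}^{(N)}$ could vanish, so $d(v_i,v_j)$ could be $N+1$ rather than $N$. I expect the clean way to absorb this is either to replace the ceiling by $\lfloor\,\cdot\,\rfloor+1$, or to note that the triangle-inequality step producing $\Phi_{ij}$ is strict except when $p_{ij}^{(n)}-\pi_j$ is, up to a single scalar, a pure power of one eigenvalue of modulus $\rho$; outside that situation $|p_{ij}^{(N)}-\pi_j|<\Phi_{ij}\rho^{N}=\pi_j$ and positivity is recovered. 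Apart from this bookkeeping (and the trivial small cases, e.g. $i=j$ or $\Phi_{ij}=0$), the proof is a direct inversion of the convergence estimate and needs no further input.
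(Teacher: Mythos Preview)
Your approach is essentially identical to the paper's: invoke Theorem \ref{markov_convergence} to get $|p_{ij}^{(n)}-\pi_j|\le\Phi\rho^n$, solve for the threshold $n$ beyond which $\Phi\rho^n<\pi_j$ forces $p_{ij}^{(n)}>0$, and translate this into a bound on $d(v_i,v_j)$ and hence on $D(\Gamma)$. The boundary case you flag (when $\log(\Phi/\pi_j)/\log(1/\rho)$ is an integer) and your remark that $\Phi$ depends on both $i$ and $j$ are both more careful than the paper's own proof, which writes only $\max_j$ and passes silently from the strict inequality $n>\log(\Phi/\pi_j)/\log(1/\rho)$ to the ceiling without addressing the integer case.
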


\begin{proof}
    Let $\Gamma$ be a strongly connected directed graph with Markov matrix $P$. Suppose $P$ is aperiodic and its eigenvalues are all non-zero and simple. Then by Theorem \ref{markov_convergence},
    $$\abs{p_{ij}^{(n)}-\pi_j} \leq \rho^{n}\Phi.$$
    Hence, $p_{ij}^{(n)} > 0$ if, 
    $$ n > \frac{\log\bigg(\Phi / \pi_j \bigg)}
    {\log(1/\rho)}.$$
    Recall that the transition probability $p_{ij}^{(n)}$ is strictly positive if and only if there exists a path of length $n$ from $v_i$ to $v_j$ in the graph $\Gamma$. Therefore, we can conclude, 
    $$ D(\Gamma) \leq \max_j \left\lceil \frac{\log\bigg(\Phi / \pi_j \bigg)}
    {\log(1/\rho)} \right\rceil .$$
\end{proof}

We can show that this upper bound agrees with Theorem 1 of \cite{Chung} by treating undirected graphs as a special case of directed graphs where the existence of an undirected edge $(v_i, v_j) \in V(\Gamma)$ requires both $p_{ij}$ and $p_{ji}$ to be nonzero.  

\begin{cor}[F.R.K. Chung] \label{regular_diameter}
    Let $\Gamma$ be an undirected \textit{k}-regular graph with adjacency matrix $A$. Let $P=\frac{1}{k}A$, then $P$ is a Markov matrix for $\Gamma$. Suppose $P$ is aperiodic with non-zero simple eigenvalues. Then, 
    $$D(\Gamma) \leq \left\lceil \frac{\log(m-1)}{\log(k/ \tau)} \right\rceil,$$
    where $\tau$ is the second largest eigenvalue of $A$ in terms of absolute value. 
\end{cor}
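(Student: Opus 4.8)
The plan is to specialize Theorem \ref{directed_diameter} to the $k$-regular undirected setting, where both the stationary distribution and the constant in the error term collapse to explicit quantities. First I would record the elementary facts. Since $\Gamma$ is $k$-regular, every row and column of $A$ sums to $k$, so $P=\frac1k A$ really is a Markov matrix, and the uniform vector $\boldsymbol{\pi}$ with $\pi_j=1/m$ satisfies $\boldsymbol{\pi}P=\boldsymbol{\pi}$; by the Fundamental Theorem it is the unique stationary distribution. The eigenvalues of $P$ are precisely $\mu/k$ as $\mu$ ranges over the eigenvalues of $A$, the dominant one being $\mu_1=k$ by Perron--Frobenius, so $\rho=\tau/k$ and hence $\log(1/\rho)=\log(k/\tau)$.

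The real point is to replace the generic constant $\Phi$ by something sharper, and this is where the symmetry of $A$ enters. Because $P$ is real symmetric with simple eigenvalues, it has an orthonormal eigenbasis $u_1,\ldots,u_m$ and $P^n=\sum_{k=1}^m \lambda_k^n\, u_k u_k^{T}$; comparing with the closed form of Theorem \ref{recurrent_matrix_entries} (whose coefficients are unique since the $\lambda_k$ are distinct and nonzero, by the Vandermonde computation behind Proposition \ref{coefficients prop}) gives $c_{ij}^{(k)}=(u_k)_i(u_k)_j$. Taking $u_1=\frac{1}{\sqrt{m}}(1,\ldots,1)$ recovers $c_{ij}^{(1)}=1/m=\pi_j$. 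Then Cauchy--Schwarz, together with the fact that the rows of an orthogonal matrix are unit vectors (so $\sum_{k=1}^m (u_k)_i^2=1$), yields
\[
\sum_{k=2}^m \abs{c_{ij}^{(k)}}\;\le\;\sqrt{\sum_{k=2}^m (u_k)_i^2}\;\sqrt{\sum_{k=2}^m (u_k)_j^2}
\;=\;\sqrt{\Bigl(1-\tfrac1m\Bigr)\Bigl(1-\tfrac1m\Bigr)}\;=\;\frac{m-1}{m}.
\]
Consequently $\abs{p_{ij}^{(n)}-\pi_j}\le \frac{m-1}{m}\rho^n$, which is exactly the estimate of Theorem \ref{markov_convergence} with $\Phi$ improved to $\frac{m-1}{m}$.

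Finally I would rerun the argument of Theorem \ref{directed_diameter} with this constant: $p_{ij}^{(n)}$ is strictly positive as soon as $\frac{m-1}{m}\rho^n<\pi_j=\frac1m$, i.e. $(m-1)\rho^n<1$, i.e. $n>\log(m-1)/\log(1/\rho)=\log(m-1)/\log(k/\tau)$. Since a positive $n$-step transition probability corresponds to a walk, hence a path, of length $n$ from $v_i$ to $v_j$, this forces $d(v_i,v_j)\le \lceil \log(m-1)/\log(k/\tau)\rceil$ uniformly in $i,j$, which is the claimed diameter bound. I expect the only genuine obstacle to be the sharpening of $\Phi$ to $\frac{m-1}{m}$: one must not substitute the elementary-symmetric-function expression for $\Phi$ blindly into Theorem \ref{directed_diameter}, but instead return to the coefficients $c_{ij}^{(k)}$ and use that symmetry of $A$ makes them products $(u_k)_i(u_k)_j$ over an orthonormal basis. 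The identification of $\boldsymbol{\pi}$ and $\rho$ and the final logarithmic manipulation are routine, the one fiddly detail being the boundary case where $\log(m-1)/\log(k/\tau)$ happens to be an integer, handled exactly as in the proof of Theorem \ref{directed_diameter}.
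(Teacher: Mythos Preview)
Your proposal is correct and follows essentially the same route as the paper: you diagonalize the symmetric matrix $P$ by an orthonormal eigenbasis, identify $c_{ij}^{(k)}=(u_k)_i(u_k)_j$, take $u_1=\frac{1}{\sqrt m}(1,\dots,1)$ to get $\pi_j=1/m$, and apply Cauchy--Schwarz to bound $\sum_{k\ge2}|c_{ij}^{(k)}|$ by $(m-1)/m$, then feed this into the diameter argument with $\rho=\tau/k$. The only cosmetic difference is that the paper reads off $\pi_j$ from the limit $b_{i1}b_{j1}$ rather than from the row-sum observation, but the substance is identical.
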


\begin{proof}
    Let $\Gamma$ be an undirected graph, then its adjacency matrix $A$ is symmetric and hence so is the associated Markov matrix $P=\frac{1}{k}A$. Let $\lambda_1, \ldots, \lambda_m$ be the eigenvalues of $P$ listed in descending order according to absolute value. Suppose they are all simple and non-zero, then $\rho=\abs{\lambda_2}$.  

    By Theorem \ref{directed_diameter}, 
    $$ D(\Gamma) \leq \max_j \left\lceil \frac{\log\bigg(\Phi / \pi_j \bigg)}
    {\log(1/\rho)} \right\rceil,$$
    where $p_{ij}^{(n)}=\sum_{k=1}^m c_{ij}^{(k)} \lambda_k^n$ and $\Phi = \sum\limits_{k=2}^m \abs{c_{ij}^{(k)}}$. We will show that $\Phi \leq \frac{m-1}{m}$ and $ \pi_j = \frac{1}{m}$. 

    As a symmetric matrix, $P$ can be diagonalized by an orthogonal matrix, $B=[b_{ij}]$, whose columns are orthonormal eigenvectors of $P$. Therefore, 
    $$p_{ij}^{(n)} = \sum_{k=1}^m b_{ik} \lambda_k^n b_{jk},$$
    and so $c_{ij}^{(k)}=b_{ik}b_{jk}$. By the Perron-Frobenius theorem, the dominant eigenvalue of $P$ is $\lambda_1 = 1$ and $\abs{\lambda_k} < 1$ for $k \ge 2$. The dominant eigenvalue has right eigenvector of all 1's. Therefore, the first column of $B$ is $[1/\sqrt{m}, \ldots, 1/\sqrt{m}]^T$. Applying the Cauchy-Schwarz inequality, we achieve the following upper bound, 
    \begin{align*}
        \Phi &\leq  \sqrt{\sum\limits_{k=2}^m \abs{b_{ik}}^2}\sqrt{\sum\limits_{k=2}^m \abs{b_{jk}}^2} \\
        &= \sqrt{1-\abs{b_{i1}}^2}\sqrt{1-\abs{b_{j1}}^2} \\
        &= \frac{m-1}{m}.
    \end{align*}
    Furthermore, by the Fundamental Theorem of Markov Chains, 
    \begin{align*}
        \pi_j &= \lim_{n \to \infty} p_{ij}^{(n)} \\
        &= b_{i1}b_{j1} \\
        &=\frac{1}{m}.
    \end{align*}
    Since $P=\frac{1}{k}A$, the eigenvalues of $A$ are $\mu_i = k\lambda_i$. Therefore, $\rho = \tau/k$ where $\tau$ is the second largest eigenvalue of $A$ in absolute value. Hence, 
    $$\max_j \left\lceil \frac{\log\bigg(\Phi / \pi_j \bigg)}
    {\log(1/\rho)} \right\rceil \leq \left\lceil \frac{\log(m-1)}
    {\log(k/\tau)} \right\rceil, $$
    which proves the Corollary.
\end{proof}

Corollary \ref{regular_diameter} shows how we can treat undirected graphs as a special case of directed graphs when constructing Markov matrices for them. Furthermore, in the proof of the Corollary, we make use of the diagonalization of a symmetric Markov matrix of an undirected $k$-regular graph. It is useful to note here that if we allow ourselves to add self-loops to each vertex, then any undirected graph $\Gamma$ has a symmetric Markov matrix. All edges will be weighted with $1/d$ where $d$ is the maximum outdegree of any vertex in the graph and for each vertex $v_i$, its self-loop will be weighted with $1-d(v_i)/d$. The addition of these self-loops does not affect the diameter of the graph, so in the case that this Markov chain is aperiodic, irreducible and its eigenvalues are simple and non-zero, we can show that $D(\Gamma) \leq  \left\lceil \frac{\log(m-1)}{\log(1/\rho)} \right\rceil$.

\section{Concluding remarks}

There is a deep and foundational relationship between the theory of Markov chains and graph theory. Exploring their intersection offers insights with relevance to the emerging field of network science. Many of the results we obtained are well positioned to find meaningful applications in this context. Indeed, some of this work arose in the context of studying neural networks
\cite{MP} with a view to identifying ``brain hubs''
or centers of ``high'' neural activity.  Undoubtedly,
the symbiosis between the theory of Markov chains, graph theory and other branches
of science will expand in all directions.
$$\quad $$
\noindent{Acknowledgements.}  We thank Professors
Francesco Cellarosi and Brad Rodgers for their comments on an earlier version of this paper.

\end{document}